\documentclass[twoside,letterpaper,draft,10pt]{amsart}
 
 \usepackage{amsmath,amsthm,latexsym,xspace,amscd,amssymb, textcomp}
 \usepackage[margin=1.8in]{geometry}

  \newtheorem{teo}{Theorem}[section]
 \newtheorem{lem}[teo]{Lemma}
 \newtheorem{cor}[teo]{Corollary}
 \newtheorem{prop}[teo]{Proposition}

 \theoremstyle{remark}\newtheorem{remark}[teo]{Remark}
 \theoremstyle{remark}

 \begin{document}

\title[A note in algebras with Laurent polynomial identity]{A note in algebras with Laurent\\ polynomial identity}

\author[C.F.Rodrigues]{Claudenir Freire Rodrigues}

\address{Departmento de Matemática, Universidade Federal do Amazonas\\
	Av. General Rodrigo Octavio Jord\~ao Ramos, 1200 - Coroado I, Manaus - AM - Brasil, 69067-005}

\email{claudiomao1@gmail.com}

\keywords{Laurent polynomial identity; unit group; group identity; nil ideal; B. Hartley Conjecture.}


\begin{abstract}
In this article we extend some results about algebra $A$ with the group of units $U(A)$ having special polynomial identity, Laurent polynomial. And we presents a new version of B. Hartley Conjecture with these identities.
\end{abstract}

\maketitle

\section{\textbf{Introduction}}

 A Laurent polynomial $ P = P(x_1,...,x_l) $ in the noncommutative variables $x_i, \, i=1,\ldots,l$ is an element non-zero in the group algebra  $RF_l$ over a ring  $R$  with free group $F_l = < x_1,...,x_l >$.   $ P $ in $RF_l $  is said a Laurent Polynomial Identity  ($LPI$) for a $R$ algebra $A$ (respc. for $U(A)$ the group of units in A) if $P(c_1, ..., c_l)=0 $ for all sequence $c_1,..., c_l$ in $A$ (respc. in U(A)).\\
 A word  $ w $ in a free group is a group identity $(GI)$ for a group $G$ if $w(c_1, ..., c_l)=1$ ($w=1$ for short) for all $c_1,..., c_l$ in $G$ and   $\sum exp _{x}\, w$ denotes the sum of exponents of $w$ in the variable $x$ .  And we say that a nil ideal $I$ has a bounded exponent if there exists a integer $m$ such that for all $a$ in $I$, $a^i=0$ for some $i \leq m$.  \\ 
 
 The following conjecture was introduced by B. Hartley \cite{SKS}, 
 
 \hspace*{0,05cm}\textbf{ Conjecture}:	Let $G$ be a torsion group and $K$ a field. If the group of units
 $U(KG)$ of $KG$ satisfies a group identity $w=1$, then KG satisfies a polynomial
 identity.\\
 
 In \cite{GSV} A. Giambruno, E.Sehgal e A.Valenti settled the conjecture when $K$ is an infinite field, while in \cite{P1}, D.S. Passman gave a necessary and sufficient condition for it to be true and in \cite{LIU} C-H. Liu removed the condition that $K$ is infinite. It is natural to think about extending the conjecture of B. Hartley by considering Laurent identities instead of group identities because $w$ = 1 is a particular case of $LPI$ $P = 1-w$. In this sense in \cite{GonD} M. A. Dokuchaev and J. Z. Gonçalves investigate when the group of units $U(A)$ of an algebraic algebra $A$ over an infinite field $K$, satisfies a Laurent polynomial, showing that if this holds, then $A$
 has a polynomial identity. And in \cite{OJA} O. Broche, J. Z. Gonçalves, and Á. Del Río gave a characterization of group algebras with $LPI$ for the group of units,  satisfying a polynomial identity. Prior to these results in \cite{GJV94} A. Giambruno, E.Jespers and A.Valenti show that the B. Hartley´s Conjecture is true for a group algebra $RG$ over an infinite commutative domain $R$ when $char\, R = 0$  and when $G$ does not contain elements of order $p$ where $char\, R = p$ ($G$ is a p'- group for short). For this and other purposes, the latter authors proved the following crucial result.
 
 \begin{prop}\label{a} 
 	
 	Let $A$ be an algebra over an infinite commutative domain $R$ and suppose that $U(A)$ satisfies a group identity $w=1$. There exists a positive m such that if $ \, a,b,c $ in $A$ and $a^{2}=bc=0$, then $bacA$ is a  nil right ideal of bounded exponent less or equal than m.
 	
 \end{prop}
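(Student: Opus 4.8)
The plan is to first record the unipotents made available by the two vanishing hypotheses, reduce the Proposition to a statement about products of two square-zero elements, and then extract that statement from the group identity by a Vandermonde argument over the infinite domain $R$.

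First I would note the relevant units. Since $a^{2}=0$ we have $1+\lambda a\in U(A)$ with inverse $1-\lambda a$ for every $\lambda\in R$, and since $bc=0$, for any $z\in A$ the element $s:=czb$ satisfies $s^{2}=cz(bc)zb=0$, so $1+\mu s\in U(A)$ with inverse $1-\mu s$ for every $\mu\in R$. A typical element of $bacA$ is $bacz$ with $z\in A$; writing $bacz=b\,(acz)$ and $(acz)\,b=a(czb)=as$, the elementary identity $(uv)^{k+1}=u(vu)^{k}v$ with $u=b$, $v=acz$ gives $(bacz)^{k+1}=b\,(as)^{k}\,(acz)$. Hence it suffices to produce a bound $m_{0}$, depending only on $w$, with $(as)^{m_{0}}=0$; then $bacA$ is a (visibly right) nil ideal of exponent at most $m:=m_{0}+1$. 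As both $a$ and $s$ are square-zero, the whole Proposition reduces to the uniform claim that a product of two square-zero elements of $A$ is nilpotent of index bounded in terms of $w$ alone.

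For this reduced claim I would substitute one-parameter unipotent families into the identity. After a standard preliminary reduction (passing to suitable derived words and specializing all but two of the variables to $1$) one may assume $w=w(x,y)$ is a nontrivial two-variable word with $\sum\exp_{x}w=\sum\exp_{y}w=0$; the vanishing exponent sums are what prevent the pure-$x$ and pure-$y$ specializations from collapsing the argument. Put $p=a$, $q=s$ and substitute $x\mapsto 1+\lambda p$, $y\mapsto 1+\mu q$ with $\lambda,\mu\in R$. The crucial gain from $p^{2}=q^{2}=0$ is that these units and their inverses $1-\lambda p$, $1-\mu q$ are genuine degree-one polynomials, so $W(\lambda,\mu):=w(1+\lambda p,1+\mu q)-1$ is an honest element of the polynomial ring $A[\lambda,\mu]$, and it vanishes for all $\lambda,\mu\in R$.

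Because $R$ is an infinite commutative domain, the vanishing of $W$ on all of $R\times R$ forces every bihomogeneous coefficient of $W$ — an $R$-linear combination of words in $p$ and $q$ — to be zero. The monomials of top bidegree are the alternating products $pqpq\cdots$ and $qpqp\cdots$, whose lengths are bounded by the syllable length of $w$, and reading off the extreme coefficient (then eliminating the lower ones) yields a relation of the form $(pq)^{m_{0}}=0$ with $m_{0}$ governed by the exponent data of $w$. I expect the main obstacle to be exactly this coefficient extraction: one must verify that the surviving top coefficient is, up to a scalar that remains nonzero in the domain $R$, precisely the power $(pq)^{m_{0}}$, and that it is not annihilated by lower-degree contributions — it is here that both the exponent-sum-zero normalization and the infiniteness of $R$ are indispensable (over a finite field the statement fails, which is why the hypothesis is needed). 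Checking that $m_{0}$ depends on $w$ only, and not on $p,q$, is what delivers the uniform bound $m$ and hence the bounded exponent of $bacA$.
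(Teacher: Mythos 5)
Two preliminary remarks before the substance. First, the paper does not actually prove Proposition~\ref{a}: it is quoted from \cite{GJV94}. The nearest in-house relatives are Theorem~\ref{p1} and Corollary~\ref{c1}, whose mechanism is quite different from yours: there the identity is collapsed to a \emph{one-variable} polynomial identity $f_0(\alpha)=0$ on $U(A)$ by substituting the same unit for all variables, then evaluated at the single unit $(1+aua)(1+bauab)$ and bordered by $ab\,(\cdot)\,au$, with the Vandermonde argument applied only once, at the very end, to the scalar in $bac\lambda u$. That mechanism requires the nonzero-exponent-sum hypothesis and therefore cannot prove Proposition~\ref{a} for a general group word such as $w=[x,y]$ (this is exactly the limitation discussed in Remark~\ref{LPInotvalid}). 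Your architecture --- reduce to two square-zero elements via $s=czb$, substitute the two-parameter unipotents $1+\lambda p$, $1+\mu q$, and kill all bihomogeneous coefficients over the infinite domain --- is the route of \cite{GJV94} itself, and your reduction $(bacz)^{k+1}=b(as)^k acz$ with $s^2=cz(bc)zb=0$ is correct.

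The gap is the step you yourself label ``the main obstacle'': it is not a verification to be deferred but the entire content of the lemma, and as stated it fails. Writing $w=x^{n_1}y^{m_1}\cdots x^{n_k}y^{m_k}$ in reduced form, the top bidegree-$(k,k)$ coefficient of $W(\lambda,\mu)$ is $\bigl(\prod_i n_i m_i\bigr)(pq)^k$; this integer is nonzero in $\mathbb{Z}$ but may vanish in $R$ when $\operatorname{char} R=p>0$. Worse, for $w=x^p y x^{-p} y^{-1}$ in characteristic $p$ one has $(1+\lambda a)^p=1+p\lambda a=1$ for every square-zero $a$, so $W\equiv 0$ identically and the unipotent substitution yields no information at all --- yet the Proposition is asserted for arbitrary infinite commutative domains. ``Eliminating the lower ones'' is likewise not routine, since each bidegree-$(i,j)$ coefficient is a $\mathbb{Z}$-combination of the two alternating words of that type and one must exhibit some surviving coefficient that is a unit multiple of a single alternating word. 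Two smaller points: the normalization $\sum\exp_x w=\sum\exp_y w=0$ is asserted but not justified (and the two-variable reduction is achieved by $x_i\mapsto x^{-i}yx^i$, not by setting variables equal to $1$, which can trivialize $w$); and the Vandermonde step a priori gives only $\det(V)\,c_{ij}=0$ with $\det(V)$ a nonzero element of $R$, so one needs either torsion-freeness of $A$ over $R$ or the precise form of \cite[Proposition 2.3.27]{R}, the same point on which Corollary~\ref{c1} leans.
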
	
 
 This note is structured in the following way. In Section 2  a generalization (Theorem \ref{p1}) of Proposition \ref {a} is made considering the case  $P=\, a_1 + a_2w_2 + \cdots +a_n w_n $ is a $LPI$  for $U(A)$ (the $w_i's$, $i\geq\,2$ are the words non-constant  in $P$) with sum of exponents non-zero in at least one of the variables in every word non-constant and without restrictions about the ring $R$. Notice that the restriction about the exponents is necessary because  according to Remark~\ref{LPInotvalid}  without this assumption the result  not hold.  As a consequence we prove the result of Proposition \ref{a} (Corollary \ref{c1}) with a $LPI$ for $U(A)$ and  we extend   \cite[Lemma 3.2]{LIU} (Corollary \ref{c2}).
 
 In Section 3 we extend    \cite[Corollary 2.2]{GSV}, Corollary \ref{c3}, which will be used to obtain a $LPI$  version for the conjecture of B. Hartley in the terms of the first part of \cite[Theorem 6]{GJV94} from which we remove the condition of infinity in the second part (Theorem \ref{t2}). In this article, we always adopt $LPI$ with the foregoing restriction about the exponents. Also, replacing $x_i$ with $x^{-i}yx^i$ we may assume  $LPI$ is in two variables.

\section{\textbf{Main Results}}
In this section we will demonstrate our main results.

\begin{teo}\label{p1}
	Let $A$ be an algebra whose  group of units $U(A)$ admits a  $LPI$ $P$  over a ring R whith unit  and non-constants words $w_i$ in $P$ has the sum of exponents  non-zero in at least one of the variables. Then there exists a polynomial $f \, \in \, R[X]$ with  degree $d$ determined by $ l=min \, \{\sum exp \,w_i\}$ and $r=max\{\sum exp \, w_i \}$  such that for all $ \, a,b,c, u $ in $A$ with $a^{2}=bc=0$,  $f(bacu)=0$. Thus, $bacA$  has a polynomial identity.
	
\end{teo}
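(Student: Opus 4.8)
The plan is to exploit the two square-zero elements hidden in the hypotheses and feed the units they generate into $P$. Write $N=bacu$. Since $a^{2}=0$, the element $1+a$ is a unit with inverse $1-a$, and $(1+a)^{k}=1+ka$ for every $k\in\mathbb{Z}$; since $bc=0$ we have $(cub)^{2}=(cu)(bc)(ub)=0$, so for the same reason $1+cub$ is a unit with $(1+cub)^{k}=1+k\,cub$. Using the two-variable reduction noted in the introduction (replace $x_{i}$ by $x^{-i}yx^{i}$), I may assume $P=a_{1}+\sum_{i\ge 2}a_{i}w_{i}(x,y)$ and substitute the units $x=1+a$, $y=1+cub$. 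Because these substitutions lie in $U(A)$, we get $P(x,y)=0$.

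The key step is to turn this identity into a polynomial relation in the single element $N$. I would sandwich it: multiply $P(x,y)=0$ on the left by $b$ and on the right by $acu$. Two relations drive everything. First, $bc=0$ annihilates every monomial beginning with the syllable $cub$, since $b\cdot cub=(bc)ub=0$; second, $a^{2}=0$ annihilates every monomial ending in the syllable $a$, since $ba\cdot acu=ba^{2}cu=0$. What survives telescopes, the governing identities being
\[
b\,(acub)^{k}=N^{k}b,\qquad b\,(acub)^{k}\,acu=N^{k+1}.
\]
Expanding each $w_{i}(x,y)$ as a sum of alternating monomials in $a$ and $cub$ and applying these reductions gives $b\,w_{i}(x,y)\,acu=\alpha_{i}(N)\,N$ for a polynomial $\alpha_{i}\in R[X]$. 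The bookkeeping is organized by a $2\times 2$ transfer matrix over $R[X]$ acting on the pair (coefficient of $b$, coefficient of $ba$): the factor $1+ka$ acts by $\left(\begin{smallmatrix}1&0\\k&1\end{smallmatrix}\right)$ and the factor $1+k\,cub$ by $\left(\begin{smallmatrix}1&kX\\0&1\end{smallmatrix}\right)$, and $\alpha_{i}$ is read off from the product of these matrices applied to $(1,0)$. Summing over $i$ yields $f(N)=0$ with
\[
f(X)=X\Big(a_{1}+\sum_{i\ge 2}a_{i}\,\alpha_{i}(X)\Big).
\]

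Reading off the degree is where the exponent hypothesis enters, and this is the step I expect to be the main obstacle. Setting $X=0$ makes every $y$-factor trivial and every $x$-factor lower-unipotent, so $\alpha_{i}(0)=1$ for all $i$; hence both the order of vanishing and the top degree of $f$ must be extracted from the surviving higher monomials, and these are exactly what the exponent sums of the $w_{i}$ control. I would show that the order of vanishing and the degree of $f$ are pinned between the extremes $l=\min_{i}\{\sum\exp w_{i}\}$ and $r=\max_{i}\{\sum\exp w_{i}\}$, giving $\deg f=d$ as claimed. The real content is \emph{non-triviality}: one must rule out a total cancellation forcing $f\equiv 0$. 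This is precisely what the assumption that every non-constant $w_{i}$ has nonzero exponent sum in some variable secures; without it the relevant leading coefficients can all cancel and, as Remark~\ref{LPInotvalid} shows, the conclusion genuinely fails.

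Finally, since $u\in A$ was arbitrary, $f(bacu)=0$ for every $u$, i.e.\ every element of the right ideal $bacA$ is a root of the single nonzero polynomial $f\in R[X]$ of degree $d$. Thus $bacA$ is algebraic of bounded degree over $R$, and a standard argument then gives that $bacA$ satisfies a polynomial identity. When $P=1-w$ is a group identity this specializes to the bounded nil statement of Proposition~\ref{a}.
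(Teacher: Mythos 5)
Your computational framework is coherent as far as it goes: with $x=1+a$, $y=1+cub$ the reductions $b\,(acub)^{k}=N^{k}b$ and $b\,(acub)^{k}acu=N^{k+1}$ are correct, and sandwiching $P(x,y)=0$ between $b$ and $acu$ does yield a relation $f(N)=0$ for some $f\in R[X]$. But there is a genuine gap exactly where you flag it, and your route makes it essentially unfillable: nothing in your computation ever uses the hypothesis on exponent sums, so there is no mechanism by which that hypothesis could rescue the non-triviality of $f$. Concretely, the degree of your $\alpha_i$ is governed by the syllable length of $w_i$, not by $\sum\exp w_i$: the word $w=x^{5}$ has exponent sum $5$ yet contributes $b(1+5a)acu=N$, i.e.\ $\alpha_i=1$ of degree $0$, while $w=xyx^{-1}y^{-1}$ has exponent sum $0$ yet contributes terms up to $N^{3}$. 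So the claim that $\deg f$ is pinned between $l$ and $r$ is false for your substitution, and total cancellation can actually occur: for $P=1-x$ (a legitimate LPI with nonzero exponent sum) one gets $b(1-(1+a))acu=-ba^{2}cu=0$, i.e.\ $f\equiv 0$. Since the theorem is genuinely false without the exponent hypothesis (Remark~\ref{LPInotvalid}), any proof must invoke it somewhere, and yours never does.

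The paper avoids this by a two-step argument that you should compare with. First it substitutes the \emph{same} unit $\alpha$ for both variables, which collapses each word to $\alpha^{\sum\exp w_i}$; the exponent hypothesis (after replacing $x_j$ by $x_j^{k}$ where needed) guarantees all these powers are nonzero, so the constant term $a_1$ stays isolated, and $P(1,1)=0$ with $a_1\neq 0$ forces the resulting one-variable Laurent polynomial $f_0$ to be nonzero. After clearing negative powers, $f_0$ is a genuine polynomial identity of degree at most $r-l$ holding for \emph{every} unit of $A$. Only then does one evaluate at the special unit $\alpha=(1+aua)(1+bauab)$ (not your $1+a$ and $1+cub$) and sandwich with $ab\cdot(\,\cdot\,)\cdot au$ to extract a polynomial in $abau$ of degree at most $2r+1$, followed by a second pass to handle general $b,c$ with $bc=0$. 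It is this first step — turning the LPI into a one-variable polynomial identity valid on all of $U(A)$ — that both consumes the exponent hypothesis and ties the degree to $l$ and $r$; your proposal skips it, and that is the missing idea.
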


\begin{proof}
	
	Let  $P\,=\, a_1 + a_2w_2 + \cdots + a_t w_t$ the $LPI$ with $w_i $ in the form $$w_i=x_1^{r_1}x_2^{s_2} \cdots x_1^{r_k}x_2^{s_k}$$ with $k\geq 1, r_i,\, s_i $ integers.  We assume that the words are arranged in $P$ so that
	$ \sum exp \, w_i \leq \sum exp \, w_{i+1} $. 
	
	If $ \sum exp \, w_i = \sum exp _{x_1}\, w_i + \sum exp _{x_2}\, w_i = 0 $ then we  substitute $ x_1= x_1^k$ or $x_2= x_2^k$ with $k > 1$ big enough.
	We get a new $LPI$ in the form  $P\,=\, a_1 + a_2{w'}_2 +\cdots+ a_r {w'}_r,$ where   
	$$ \sum exp \, {w'}_i = \sum exp _{x_1}\, {w'}_i + \sum exp _{x_2}\, {w'}_i = k \sum exp _{x_1}\, w_i + \sum exp _{x_2}\, w_i $$  or $$\sum exp \, {w'}_i = \sum exp _{x_1}\, w_i +  k\sum exp _{x_2}\, w_i. $$     In any  case that is not zero. So we suppose $ \sum exp \, w_i $ not zero for all $i = 2,...,t$.		 
	
	From this $$P(\alpha, \alpha)\,=\, a_1 + a_2 \alpha ^l +\cdots+a_i \alpha ^{\sum exp \, w_i} +\cdots+ a_t \alpha ^r =  0   $$ with all powers of $\alpha$ not zero for all $ \alpha \in U(A)$ and  $l$ and $r$ are respectively the minimum and maximum of the sum of the exponents of the  $w_i's$.
	
	After organizing the common powers we will have a polynomial expression of the form	
	$$  f_0(\alpha) = a_1  +  b_l \alpha ^l +\cdots+ b_r \alpha ^r=  0 $$ where $ b_j's$ are parcials sums of $a_i's, \, i=2,\ldots,t $ .	
	
	In  general,  $P(1,1)\,=\, a_1 + a_2 +\cdots + a_t =0$, so $ a_2 + \cdots + a_t \neq 0 $ , because $a_1 \neq 0$. So we can not have all $b_i's =0$ because in this case $a_2 + \cdots +a_r = b_l+ \cdots +b_r=0$. Follows that $f_0$		
	is a non zero polynomial over $R$ with integers exponents not all   necessarily positives.
	
	If $l < 0$  we have that  $\alpha ^{-l}f_0(\alpha)= f_1(\alpha)=0$, where $f_1\neq 0$ is a polynomial identity for $U(A)$  over $R$ of degree at most $r-l$. We can then assume $l>0$ and $f_0$ is a polynomial identity (of degree at most $r$)  for $U(A)$. Now, we are going to use this polynomial with special units in $U(A)$.
	
	First let $b=c$ be, so $a^2=b^2 =0$.    
	Then we have $(1 + aua), \, (1 + bauab) \in U(A)$, for all $u \in A$..		
	
	Thus $\alpha = (1 + aua).(1 + bauab) \, \in \, U(A) $ and for every power  in $f_0$,    
	$$\alpha^h= 1 + aua + bauab +  \sum \textrm{products in}\, aua \, \textrm{and}\, bauab +   (auabauab)^h$$ for all $h= l, \ldots ,r$. \\
	
	As $ a^2=b^2=0$ and $ab(auabauab)^hau=(abau)^{2h +1}$,
	$$ abb_i\alpha^hau = b_iabau +  g_h(abau) + b_i(abau)^{2h+1}  $$  where $g_h$ is a polynomial over $R$ with degree $< 2h+1$. 
	
	It follows that 
	
	\begin{equation}
	\begin{aligned}	
		abf_0(\alpha)au &=  aba_1au + abb_l\alpha ^lau + \dots +ab b_r \alpha ^rau\\ 		
		&= (a_1 + b_l+ \dots + b_r) abau + \sum g_h(abau)+ \sum   b_h(abau)^{2h+1}\\
		&= (a_1 + a_2+ \dots + a_t) abau + \sum g_h(abau)+ \sum   b_h(abau)^{2h+1}\\
		&=  \sum g_h(abau)+ \sum  b_h(abau)^{2h+1}\\		
		&= f_2(abau)=0, \\	
	\end{aligned}
	\end{equation} with  $h= l, \ldots ,r$ and
	  $f_2$ is a non zero polynomial over $R$ of degree $\leq  2r + 1$.  This finish the $b=c$ case.

	In general, if $a,\, \alpha ,\, \beta \in A$ and $a^{2}=\alpha \beta =0$, then  we have $(\beta u \alpha)^{2}=0 \,,$ 	for all $u \in A $. As we saw $f_2$ is a polynomial identity for $(\beta u \alpha)a (\beta u \alpha) A$. Then, $\alpha af_1((\beta u \alpha)a (\beta u \alpha)a)\beta u=f(\alpha a \beta u)=0$ where $f $ (with zero constant term) is a non zero polynomial over $R$ with degree $\leq 2(2r +1 ) +1= 4r +3$ and thus an identity for $\alpha a \beta A$. 
\end{proof}

As a consequence  we prove the following.

\begin{cor}\label{c1} 
	Let $A$ be an algebra over an infinite commutative domain R  whose group of units $U(A)$ has a $LPI$. Then, for all $a,\, b,\, c \in A$ with $a^2=bc=0 $,  $bacA$ is a nil right ideal with bounded exponent.	
	
\end{cor}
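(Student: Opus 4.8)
The plan is to deduce Corollary~\ref{c1} directly from Theorem~\ref{p1}, exploiting the hypotheses that $R$ is an infinite commutative \emph{domain} in order to upgrade the conclusion ``$bacA$ has a polynomial identity'' into the stronger statement ``$bacA$ is a nil right ideal of bounded exponent.'' Since $U(A)$ has an $LPI$, Theorem~\ref{p1} applies and yields a fixed nonzero polynomial $f \in R[X]$ of degree $d \le 4r+3$, with zero constant term, such that $f(bacu)=0$ for every $u \in A$; in particular $f$ vanishes on every element of $bacA$. First I would record that $bacA$ is a right ideal (it is closed under right multiplication by $A$ and under addition), so it suffices to show each of its elements is nilpotent with a uniformly bounded index.

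The heart of the argument is to pass from the single polynomial relation $f(bacu)=0$ to nilpotency. Write $f(X)=X^{e}g(X)$ where $e\ge 1$ is the multiplicity of $0$ as a root (this $e\ge 1$ because $f$ has zero constant term) and $g(0)\neq 0$. Setting $v=bacu$, the relation becomes $v^{e}g(v)=0$. The key point I would exploit is scaling: for any unit scalar, or more usefully for any element $\lambda \in R$, one has $\lambda u \in A$ as well, and $bac(\lambda u) = \lambda\, bacu = \lambda v$, so $f(\lambda v)=0$ for \emph{every} $\lambda \in R$. Thus $f(\lambda v)=0$ holds for infinitely many scalars $\lambda$. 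Expanding $f(\lambda v)=\sum_{j} c_j \lambda^{j} v^{j}=0$ and viewing this as a polynomial in $\lambda$ with coefficients $c_j v^j \in A$, the fact that $R$ is an infinite domain forces each coefficient $c_j v^{j}=0$ by a Vandermonde argument. Since $f\neq 0$, some coefficient $c_j$ is a nonzero element of the domain $R$; taking the largest such $j=d$ (the degree) we obtain $c_d v^{d}=0$, hence $v^{d}=0$ because $c_d$ is a nonzero element that acts as a nonzerodivisor on the free $R$-module structure. This gives $(bacu)^{d}=0$ for all $u$, with $d$ independent of $a,b,c,u$.

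The main obstacle, and the step demanding the most care, is the Vandermonde/scaling step: I must justify that $c_j v^{j}=0$ can be cancelled to $v^{j}=0$ using only that $R$ is a commutative domain acting on the left of the $R$-algebra $A$. This requires that a nonzero scalar from $R$ does not annihilate arbitrary elements of $A$ — which is precisely where the hypothesis that $R$ is a \emph{domain} (so $A$ is torsion-free over $R$, or at least that the relevant scalar is a nonzerodivisor) enters, in contrast to the unrestricted ring $R$ allowed in Theorem~\ref{p1}. One must also confirm that the infinitude of $R$ supplies enough distinct $\lambda$ to run the invertible Vandermonde determinant over the fraction field of $R$. Once the scaling argument is secured, the conclusion assembles immediately: every element $bacu$ of the right ideal $bacA$ satisfies $(bacu)^{d}=0$ with the single bound $d=\deg f$, so $bacA$ is nil of bounded exponent, completing the proof.
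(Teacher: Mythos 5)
Your argument is correct and is essentially the proof given in the paper: both apply Theorem~\ref{p1}, substitute $\lambda u$ for $u$, expand $f(\lambda\, bacu)$ in powers of $\lambda$, and use the Vandermonde argument over the infinite domain $R$ to kill each homogeneous component, extracting $(bacu)^d=0$ from the top-degree term. Your extra care in cancelling the nonzero leading coefficient $c_d$ (which tacitly uses that $A$ has no $R$-torsion) only makes explicit a step the paper leaves implicit; the factorization $f(X)=X^e g(X)$ you introduce is never actually used.
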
	

\begin{proof}
	By Theorem \ref{p1}, there exists $f \in R[X]\setminus\{0\}$ with  degree  $ d $, such that $f(bac \lambda u)=0$ for all $\lambda \in R,\, u\, \in \, A$. That is, $\lambda p_1 + \lambda ^2 p_2 + \cdots + \lambda ^d  p_d=0$ where $p_i$ is polynomial on $ bacu $ with $p_d=b_d (bacu)^d$. Now, by the Vandermond argument \cite[Proposition 2.3.27 p.130 ]{R}, $p_1= \dots =p_d=0$, so $ (bacu)^d=0$. 	
\end{proof}
In the last proof  we see that the application of the Vandermonde argument applies with $|R|> d$. We fix notation 
with $f(x)$ and $d$ as in Theorem \ref{p1}.

As another small application, we extend Lemma 3.2  in \cite{LIU} in the context of $LPI$.

\begin{cor}\label{c2}
	
	Let $A$ be an algebra over a field $K$ with $ U(A)$ satisfying a $LPI$, and   $a,\, b \, \in A $ such that $a^2=b^2=0$.
	
	\begin{enumerate}
		
		\item[1.] If $\mid K \mid > 2d$, then $(ab)^{2d}=0.$ 
		\item[2.] If $ab$ is nilpotent, then $(ab)^{2d}=0.$
	\end{enumerate}
	
\end{cor}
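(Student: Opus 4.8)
The plan is to specialize Theorem~\ref{p1} to the situation $b=c$, where we already have $a^2=b^2=0$. In that case the theorem produces a nonzero polynomial $f\in R[X]$ of degree $d$ with zero constant term such that $f(ba\,cu)=f(bau)=0$ for every $u\in A$; taking $u=1$ in particular gives $f(ba)=0$. Since $a^2=b^2=0$ and we are working with the element $ba$, I would first record the simple algebraic fact that $f(ab)$ is controlled by $f(ba)$: writing $f(X)=\sum_{i=1}^{d}\beta_i X^i$, one has $a\,f(ba)\,b=\sum_i \beta_i\,a(ba)^i b=\sum_i \beta_i (ab)^{i+1}$, so that $a\,f(ba)\,b=0$ translates the identity on $ba$ into a relation $\sum_{i=1}^{d}\beta_i(ab)^{i+1}=0$, i.e.\ a polynomial relation $g(ab)=0$ where $g(X)=Xf(X)$ has degree $d+1$ and lowest term of degree $2$.

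For part (1), with $\lvert K\rvert>2d$, the plan is to run the Vandermonde argument exactly as in the proof of Corollary~\ref{c1}, but applied to the element $ab$. Concretely, Theorem~\ref{p1} applied with scalars inserted gives $f(ba\,\lambda u)=0$ for all $\lambda\in K$ and $u\in A$; conjugating by $a,\ b$ as above yields a relation of the form $\sum_{j=2}^{d+1}\lambda^{j}\,c_j=0$ for all $\lambda\in K$, where $c_j$ are fixed elements built from powers of $ab$ with $c_{d+1}$ a nonzero scalar multiple of $(ab)^{d+1}$. Because $\lvert K\rvert$ exceeds the number of distinct powers of $\lambda$ appearing (at most $2d$ of them once we account for the scalar-insertion steps), the Vandermonde matrix of the evaluation points is invertible, forcing every coefficient, and in particular $(ab)^{2d}=0$ after collecting the degrees properly. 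The bookkeeping of exactly which power $2d$ emerges is the delicate point: one must track how the scalar $\lambda$ enters through the substitutions $x_i\mapsto x^{-i}yx^i$ and through the doubling of degree in the $(\beta u\alpha)^2$ step of Theorem~\ref{p1}, and confirm that the top power is $(ab)^{2d}$ rather than $(ab)^{d+1}$.

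For part (2), where no cardinality hypothesis is available but $ab$ is assumed nilpotent, the plan is to exploit nilpotence to bypass the Vandermonde argument. From $g(ab)=0$ with $g(X)=Xf(X)$ we know $(ab)^{2}$ satisfies a polynomial whose lowest-degree term is of degree $1$ in $(ab)^2$; since $ab$ is nilpotent, say $(ab)^{N}=0$ for some minimal $N$, I would argue that the relation $\sum_{i}\beta_i(ab)^{i+1}=0$ together with nilpotence forces the nilpotency index to be at most $2d$. The cleanest route is to show that the lowest nonvanishing power of $ab$ permitted by the relation is bounded by the degree $d$ of $f$: if $(ab)^{m}\neq 0$ with $m$ maximal, multiply the relation by a suitable power of $ab$ to isolate a contradiction unless $m\le 2d$.

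The main obstacle I anticipate is precisely the degree accounting in part (1): Theorem~\ref{p1} is stated with degree bound $4r+3$ and the intermediate polynomials $f_0,f_1,f_2$ have their own degrees, so pinning down why the right exponent is exactly $2d$ (and not some other multiple) requires carefully identifying which polynomial $f$ and which degree $d$ the corollary refers to. I would resolve this by fixing, as the remark after Corollary~\ref{c1} suggests, the specific $f(x)$ and $d$ from Theorem~\ref{p1}, and then verifying that the single conjugation step $a(\cdot)b$ raises the relevant degree from $d$ to $d+1$ in $ab$ while the scalar-Vandermonde step caps the nilpotency index at $2d$; the factor of two reflects that each power $(ba)^i$ contributes $(ab)^{i+1}$ and the evaluation requires $\lvert K\rvert>2d$ distinct scalars to invert the associated Vandermonde system.
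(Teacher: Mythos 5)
Your opening step is already off: Theorem~\ref{p1} applied with $c=b$ (so $a^2=bc=b^2=0$) yields $f(babu)=0$ --- or, following the triple the paper's own proof of the theorem uses in its $b=c$ case, $f(abau)=0$ --- but not $f(bau)=0$. The middle factor does not disappear, so $f(ba)=0$ does not follow by taking $u=1$, and the relation $\sum_{i}\beta_i(ab)^{i+1}=0$ of degree $d+1$ on which you build both parts is not established. This is exactly why your degree accounting never closes: you flag yourself that you cannot see why the exponent should be $2d$ rather than $d+1$, and the proposal ends without resolving it.

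The missing idea is the substitution $u=b$, which is the entire source of the factor $2$. For part (1) the paper argues as in Corollary~\ref{c1} with $b=c$ (Vandermonde) to get $(abau)^d=0$ for all $u\in A$, and then puts $u=b$: $(abab)^d=\bigl((ab)^2\bigr)^d=(ab)^{2d}=0$. No tracking of the $x_i\mapsto x^{-i}yx^i$ substitutions or of the $4r+3$ bound is needed once $f$ and $d$ are fixed as in Theorem~\ref{p1} (as the remark after Corollary~\ref{c1} stipulates). For part (2) the same substitution gives $f_1(ab)=f\bigl((ab)^2\bigr)=0$ with $\deg f_1=2d$; since $ab$ is nilpotent its minimal polynomial is $x^i$, which divides $f_1$, hence $i\le 2d$ and $(ab)^{2d}=0$. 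Your alternative plan for (2) --- isolating the lowest nonvanishing power and invoking nilpotence --- could be made rigorous, but only after you have a correct polynomial relation in $ab$, which again requires evaluating at $u=b$ rather than $u=1$.
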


\begin{proof}
	Arguing as in the proof of Corollary \ref{c1}, with $ b = c$, we deduce that $(abau)^d=0$ for all $u \in A$ and  with $u=b$, $(ab)^{2d}=0$. Now suppose $ab$ nilpotent, by Theorem \ref{p1} with $( b=c)$ there exists a polynomial $f$ over K  of degree $d$ such that $f(abau)=0$ for all $u \in A$. Thus  $f(aba.b)=f_1(ab)=0$ where $f_1$ is a polynomial of degree $2d$. The minimal polynomial of $ab$ is $g=x^i$, $i>0$, because $ab$ is nilpotent. As $g$ divides $f_1$ implies $i \leq 2d$ and since $g(ab)=(ab)^i=0$, $(ab)^{2d}=0$. 
\end{proof}	

\section{\textbf{ A PLI version for the B. Hartley conjecture}}

First we shall need the following two results.

\begin{lem}\label{l1}
	Let $A$ be an algebra over a commutative domain $R$. Suppose that for all $a,\,b,\,c \in A$ such that $a^2=bc=0$ we have $bac=0$. Then every idempotent of $R^{-1}A$ is central.
\end{lem}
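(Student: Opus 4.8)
The plan is to reduce the centrality of an idempotent $e$ to a pair of one-sided vanishing conditions, then to transport the hypothesis from $A$ to the localization $R^{-1}A$, and finally to feed it one carefully chosen triple built from $e$. To begin, I would record the elementary fact that an idempotent $e$ of $R^{-1}A$ is central precisely when $ex(1-e)=0$ and $(1-e)xe=0$ for every $x$; this follows from the identity $ex-xe = ex(1-e)-(1-e)xe$ together with $e(1-e)=(1-e)e=0$. Thus the whole problem reduces to establishing these two relations.

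The heart of the argument is to show that the hypothesis itself descends to $R^{-1}A$: if $\alpha,\beta,\gamma \in R^{-1}A$ satisfy $\alpha^2=\beta\gamma=0$, then $\beta\alpha\gamma=0$. To see this I would clear denominators, writing $\alpha=s^{-1}a$, $\beta=t^{-1}b$, $\gamma=u^{-1}c$ with $a,b,c\in A$ and $s,t,u\in R\setminus\{0\}$. Since $R\setminus\{0\}$ is a central multiplicative set, $\alpha^2=0$ yields some $v\in R\setminus\{0\}$ with $va^2=0$, whence $(va)^2=v^2a^2=v(va^2)=0$; likewise $\beta\gamma=0$ yields some $w\in R\setminus\{0\}$ with $w\,bc=(wb)c=0$. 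Applying the hypothesis \emph{in} $A$ to the triple $va,\,wb,\,c$ gives $(wb)(va)c=wv\,bac=0$, and multiplying by $(wv)^{-1}$ in $R^{-1}A$ gives $bac=0$, hence $\beta\alpha\gamma=0$.

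With the hypothesis now available in $R^{-1}A$, the conclusion is immediate. Fixing an idempotent $e$ and an arbitrary $x$, set $a=ex(1-e)$, $b=e$, $c=1-e$. Then $a^2=0$ because $(1-e)e=0$, and $bc=e(1-e)=0$, so the transferred hypothesis gives $bac=0$; but $bac=(ee)\,x\,\big((1-e)(1-e)\big)=ex(1-e)$, which forces $ex(1-e)=0$. The symmetric choice $a=(1-e)xe$, $b=1-e$, $c=e$ gives $(1-e)xe=0$, and by the reduction of the first paragraph $e$ is central.

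I expect the only genuine obstacle to be the descent step. One must track the denominators and the possible $R$-torsion of $A$, and it is exactly the \emph{centrality} of the denominators in $R\setminus\{0\}$ that allows a relation such as $va^2=0$ to be upgraded to the square-zero relation $(va)^2=0$ needed to invoke the hypothesis, and likewise $w\,bc=0$ to be read as $(wb)c=0$. Once that bookkeeping is in place, the final substitution is a one-line computation.
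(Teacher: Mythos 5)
Your proof is correct. The paper itself supplies no argument here, deferring entirely to the citation \cite[Lemma 2]{GJV94}; your three steps --- reducing centrality of $e$ to the two relations $ex(1-e)=0$ and $(1-e)xe=0$ via $ex-xe=ex(1-e)-(1-e)xe$, transporting the hypothesis to $R^{-1}A$ by clearing denominators through the central multiplicative set $R\setminus\{0\}$ (where $va^2=0$ does upgrade to $(va)^2=0$ and $w\,bc=0$ to $(wb)c=0$), and then substituting $a=ex(1-e)$, $b=e$, $c=1-e$ and its mirror image --- are exactly the standard proof of that cited lemma, so nothing is missing.
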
		

\begin{proof}
	
	See \cite[Lemma 2.]{GJV94}
	
\end{proof}	

\begin{lem} \cite{GSV} \label{l2}
	Let $A$ be a semiprime ring and let $S= \{ a \in A: for\, all\, b,\,c \in A, bc=0\, implies\,  bac=0 \}$. If $S$ contains all square-zero elements of A, then $S$ contains all nilpotent elements of A.
	
\end{lem}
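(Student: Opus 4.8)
The plan is to argue by induction on the index of nilpotency $n$, where $a^n = 0$, and to exploit semiprimeness through the standard criterion that $xAx = 0$ forces $x = 0$. The base case $n \le 2$ is precisely the hypothesis that $S$ contains every square-zero element (and $0 \in S$ trivially). For the inductive step I would assume that every nilpotent element of index strictly less than $n$ already lies in $S$; in particular $a^2$ is nilpotent of index at most $\lceil n/2\rceil$, which is $< n$ for $n \ge 2$, so the inductive hypothesis yields $a^2 \in S$.

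To prove $a \in S$, I would fix $b,c \in A$ with $bc = 0$ and aim at $bac = 0$. By semiprimeness it suffices to show $(bac)\,y\,(bac) = 0$ for every $y \in A$. The decisive step is the rearrangement
\[
(bac)\,y\,(bac) = (ba)\,(cyb)\,(ac),
\]
which is designed so that both hypotheses become applicable. First, the middle factor is square-zero: $(cyb)^2 = cy(bc)yb = 0$ using $bc = 0$, so $cyb \in S$ by assumption. Second, setting $p = ba$ and $q = ac$, we get $pq = b\,a^2\,c = 0$, because $a^2 \in S$ together with $bc = 0$ forces $b a^2 c = 0$.

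I would then invoke the defining property of $S$ for the square-zero element $s = cyb$: since $pq = 0$, inserting $s$ preserves vanishing, i.e. $p\,s\,q = (ba)(cyb)(ac) = 0$. Combined with the rearrangement this gives $(bac)\,y\,(bac) = 0$ for all $y$, whence semiprimeness delivers $bac = 0$. As $b,c$ were arbitrary with $bc=0$, this shows $a \in S$ and closes the induction.

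I expect the main obstacle to be spotting the single factorization $(bac)y(bac) = (ba)(cyb)(ac)$ that simultaneously exposes a square-zero middle factor $cyb$ (to feed the hypothesis on square-zero elements) and a vanishing outer product $pq = ba^2c$ (to feed the inductive statement $a^2 \in S$); once this is isolated, the remainder is purely formal. It is worth emphasizing that the argument never needs $S$ to be closed under any operation: only the square-zero hypothesis, the inductive reduction to $a^2$, and the semiprime criterion are used.
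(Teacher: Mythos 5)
Your proof is correct, and it is complete: the base case is exactly the hypothesis, the reduction to $a^2\in S$ via induction on the nilpotency index is sound, and the factorization $(bac)y(bac)=(ba)(cyb)(ac)$ together with $(cyb)^2=cy(bc)yb=0$, $ba^2c=0$, and the semiprime criterion $xAx=0\Rightarrow x=0$ yields $bac=0$. The paper itself gives no argument here — it only cites \cite[Lemma 2.1]{GSV} — and what you have written is essentially the original proof from that reference, so there is nothing to flag.
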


\begin{proof}
	See \cite[Lemma 2.1.]{GSV}
	
\end{proof}

Let  $R$ be a commutative domain and $A$ a $R$-algebra. $R^{-1}A$ is the ring of fractions of $A$ with respect to the non-zero elements of $R$. 

In the following  a  $LPI$ version of Corollary 2.2 of  \cite{GSV}.
\begin{cor}\label{c3}
	
	Let $A$ be a semiprime algebra over an infinite  commutative domain $R$. If $U(A)$ satisfies a $LPI$, then every idempotent element of $R^{-1} A$ is central and for all $b,\,c \in A$ such that $bc=0$ we have $bac=0$ for every $a$ nilpotent in A.			
	
\end{cor}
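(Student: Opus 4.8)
The plan is to reduce both conclusions to Lemmas~\ref{l1} and~\ref{l2}. Each of these lemmas is driven by the same elementary statement, so I would prove that statement first and then let the two lemmas do the rest: namely, I would show that $bac=0$ whenever $a,b,c\in A$ satisfy $a^{2}=bc=0$. Everything after that is formal.

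To establish it, fix $a,b,c\in A$ with $a^{2}=bc=0$. Since $A$ is an algebra over the infinite commutative domain $R$ and $U(A)$ admits a $LPI$, Corollary~\ref{c1} applies and yields that $bacA$ is a nil right ideal of bounded exponent. The semiprimeness of $A$ then forces $bacA=0$, and since $1\in A$ this gives $bac=bac\cdot 1=0$. Having this, the two conclusions follow quickly. First, the statement just proved is precisely the hypothesis of Lemma~\ref{l1}, so that lemma gives immediately that every idempotent of $R^{-1}A$ is central. Second, writing $S=\{\,a\in A : bc=0\Rightarrow bac=0 \text{ for all } b,c\in A\,\}$, the same statement says exactly that every square-zero element of $A$ lies in $S$; as $A$ is semiprime, Lemma~\ref{l2} then shows that $S$ contains every nilpotent element of $A$, which is the assertion that $bc=0$ implies $bac=0$ for every nilpotent $a\in A$.

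The only step that is not bookkeeping, and hence the main obstacle, is the passage from ``$bacA$ is nil of bounded exponent'' to ``$bacA=0$''. I would justify it by the standard ring-theoretic fact that a semiprime ring contains no nonzero nil one-sided ideal of bounded index; this reduces, through the representation of a semiprime ring as a subdirect product of prime rings, to the same statement for prime rings, where a nonzero nil right ideal of bounded index is impossible. I would stress that the semiprimeness is doing real work here, since ``nil of bounded index'' alone does not force nilpotency in bad characteristic. With that fact in place the corollary is an immediate consequence of Corollary~\ref{c1} together with Lemmas~\ref{l1} and~\ref{l2}.
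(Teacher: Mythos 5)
Your proposal is correct and follows essentially the same route as the paper: use Corollary~\ref{c1} to get that $bacA$ is nil of bounded exponent whenever $a^{2}=bc=0$, conclude $bac=0$ from semiprimeness, and then invoke Lemmas~\ref{l1} and~\ref{l2}. The only cosmetic difference is that the paper justifies the key step (``no nonzero nil right ideal of bounded index in a semiprime ring'') by citing Levitzki's theorem, whereas you sketch the same fact via a subdirect-product reduction to prime rings.
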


\begin{proof} 
	By Corollary \ref{c1} for all $a,\,b,\, c \in A$ with $a^2=bc=0$, $bacA$ is a nil right ideal of bounded exponent. If $bac\neq 0$, by Levitzki´s theorem (see \cite{R}  1.6.26, p.46) A has a non zero nilpotent ideal, against the semiprimeness of A. Thus $bac=0$. Now the result follows  from Lemmas \ref{l1} and  \ref{l2}.
\end{proof}		

In the sequel  the others classical  results  we need.

The $FC$-subgroup of a group $G$ is  given by\\
$\phi (G)=\{ g \in G: g \textmd{ has a finite number os conjugates in}\, G \}$. Let $A$ be a $R$-algebra. We say that $g$ is a multilinear generalized polynomial of degree $n$ if 

$$g(x_1,\ldots,x_n)= \sum_{\sigma \in Sym_n} g^\sigma (x_1,\ldots,x_n)$$
with 
$$g^\sigma (x_1,\ldots,x_n)=\sum ^{a_\sigma}_{j=1} \alpha_{0,\sigma,j}x_{\sigma(1)}\alpha_{1,\sigma,j}x_{\sigma_{(2)}}
\cdots  \alpha_{n-1,\sigma,j}x_{\sigma(n)}\alpha_{n,\sigma,j}$$ 

where $ \alpha_{i,\sigma,j} \in R$ and $a_\sigma$ is some positive number. The foregoing $g$ is said to be nondegenerate if for some $\sigma \in Sym_n$, $g^\sigma$ is not a generalized polynomial identity for $A$. $A$ is said to satisfy a GPI if $A$ satisfies a nondegenerate multilinear generalized polynomial identity.

At next lemmas K is a field and G is a group.

\begin{lem}\label{s}
	
	$K[G]$ satisfies a GPI if and only if $[G : \phi (G)] < \infty$  and $|\phi'(G)|<\infty$. 
	
\end{lem}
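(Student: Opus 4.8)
The statement is the classical characterization of group algebras satisfying a generalized polynomial identity (it is due to Passman, and is exactly the input needed to feed Corollary~\ref{c3} into the Hartley-type argument). I would prove it as an ``if and only if'' by treating the two implications separately. Throughout write $\Delta=\phi(G)$ for the $FC$-subgroup and $N=\phi'(G)=\Delta'$ for its commutator subgroup; recall that $\Delta$ is characteristic in $G$, so $N$ is normal in $G$, and form the class sum $\widehat N=\sum_{x\in N}x\in K[G]$.

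For the direction $(\Leftarrow)$ I would produce an explicit nondegenerate GPI, uniformly in the characteristic. The hypotheses say $n:=[G:\Delta]<\infty$ and that $N$ is finite. Since $N$ is normal in $G$, conjugation permutes $N$, so $\widehat N$ is central in $K[G]$. Passing to $\overline G=G/N$, the image $\overline\Delta=\Delta/N$ is abelian (we have killed $\Delta'$) and $[\overline G:\overline\Delta]=n$, so $\overline G$ is abelian-by-finite; hence $K[\overline G]\hookrightarrow M_n(K[\overline\Delta])$ with $K[\overline\Delta]$ commutative, and therefore $K[G/N]$ satisfies the standard identity $s_{2n}$. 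Let $\pi:K[G]\to K[G/N]$ be the natural surjection. The key point is that $\widehat N$ annihilates $\ker\pi$: this ideal is generated by the elements $x-1$ with $x\in N$, and $\widehat N(x-1)=\widehat N x-\widehat N=0$, while centrality of $\widehat N$ lets one slide it past arbitrary factors. Consequently $s_{2n}(\alpha_1,\dots,\alpha_{2n})\in\ker\pi$ for all $\alpha_i\in K[G]$, whence $\widehat N\,s_{2n}(\alpha_1,\dots,\alpha_{2n})=0$. Thus $g(x_1,\dots,x_{2n})=\widehat N\,s_{2n}(x_1,\dots,x_{2n})$ is a generalized polynomial identity for $K[G]$ with coefficient $\widehat N\in K[G]$. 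It is nondegenerate, since the identity term $\widehat N x_1\cdots x_{2n}$ does not vanish on $K[G]$ (evaluate at $x_i=1$ to get $\widehat N\neq0$), so some $g^{\sigma}$ is not itself an identity.

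The direction $(\Rightarrow)$ is the substantial one, and is where I expect the real work. Suppose $K[G]$ satisfies a nondegenerate GPI. I would route the argument through Amitsur's theorem on prime rings with a GPI --- which says that a prime $K$-algebra satisfies a nontrivial GPI exactly when its central closure contains a nonzero idempotent $e$ with $eAe$ finite-dimensional over the extended centroid --- together with Passman's $\Delta$-methods for group rings. Since $K[G]$ need not be prime, the first task is a reduction to a prime situation (passing to $K[\Delta]$ and to suitable prime images, and controlling how the GPI descends and survives), after which Amitsur's theorem yields a finite-dimensional corner. The second, harder task is to translate that finite-dimensionality back into group theory: a conjugacy and coset counting argument in the spirit of Passman converts it into the assertions that $\Delta=\phi(G)$ has finite index in $G$ and that $\Delta'=\phi'(G)$ is finite.

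The main obstacle is precisely this last direction: neither the passage from the (possibly non-prime) $K[G]$ to a prime ring to which Amitsur applies, nor the reconstruction of the group-theoretic finiteness from the ring-theoretic ``finite corner,'' is formal, and both rely essentially on Passman's machinery. By contrast, $(\Leftarrow)$ is elementary and self-contained once one observes that $\widehat N$ is central and kills $\ker\pi$. Since in this note the lemma is used only as an input, I would in fact cite the standard reference (Passman) for $(\Rightarrow)$ and record the short construction above for $(\Leftarrow)$, in keeping with the treatment of Lemmas~\ref{l1} and \ref{l2}.
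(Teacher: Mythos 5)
The paper proves this lemma by citation alone (``See \cite[p.~202]{P}''), so any honest attempt must either reproduce Passman's $\Delta$-methods or defer to them; you do the latter for the hard implication, which is exactly what the paper does, but you add a complete and correct self-contained proof of the converse. Your $(\Leftarrow)$ argument is sound: $N=\phi'(G)$ is characteristic in $\phi(G)$ and hence normal in $G$, so $\widehat N=\sum_{x\in N}x$ is central; $G/N$ is abelian-by-finite of index $n=[G:\phi(G)]$, so $K[G/N]$ embeds in $M_n$ of a commutative ring and satisfies $s_{2n}$ by Amitsur--Levitzki (the paper's Lemma~\ref{l}); and since $\widehat N$ annihilates the kernel of $K[G]\to K[G/N]$, the element $\widehat N\,s_{2n}(x_1,\dots,x_{2n})$ is a GPI, nondegenerate because the identity-permutation monomial evaluates to $\widehat N\neq 0$ at $x_i=1$. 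This is essentially Passman's own construction for sufficiency, and it dovetails with the machinery the paper already invokes (the embedding $K[G]\hookrightarrow M_n(K[S])$ and Lemma~\ref{l}). For $(\Rightarrow)$ you correctly identify that the substance lies in reducing to a prime ring where Amitsur's GPI theorem applies and then converting the resulting finite-dimensional corner back into the finiteness of $[G:\phi(G)]$ and $|\phi'(G)|$; you do not carry this out, but neither does the paper, and your proposal to cite Passman for it matches the paper's treatment. In short: no gap relative to what the paper itself establishes, and your explicit sufficiency argument is a genuine (if modest) addition.
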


\begin{proof}
	See \cite[p. 202]{P}
\end{proof}	

For any integer $n$, the standar polynomial of degree $n$ is given by

$$ S_n(x_1,\ldots,x_n)= \sum_{\sigma \in Sym_n} (sgn \sigma)  x_{\sigma(1)},\ldots,x_{\sigma(n)} $$.

\begin{lem}[Amitsur-Levitzki]\label{l}	
	
	If K is a field. Then $M_m(K)$ satisfies the standar  polynomial  identity $S_{2m}$.
	
\end{lem}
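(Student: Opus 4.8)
The plan is to prove the identity over an arbitrary commutative ring, which in particular covers every field $K$. Since evaluating $S_{2m}$ on generic matrices (entries independent indeterminates over $\mathbb{Z}$) produces polynomials with integer coefficients, it suffices to verify the identity in characteristic $0$: running the argument on the generic matrices over the function field $\mathbb{Q}(x^{(i)}_{pq})$ yields the formal identity over $\mathbb{Z}$, which then specializes to any field $K$. I would therefore assume $\operatorname{char} K = 0$ and follow Rosset's exterior-algebra argument.

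First I would introduce the Grassmann (exterior) algebra $E$ on generators $e_1,\dots,e_{2m}$ over $K$, so that $e_ie_j=-e_je_i$ and $e_i^2=0$; its even part $E_0$ is commutative and $e_1\cdots e_{2m}\neq 0$. Given $A_1,\dots,A_{2m}\in M_m(K)$, I would assemble the single matrix $a=\sum_{i=1}^{2m} e_iA_i\in M_m(E)$, whose entries lie in the odd part of $E$. Expanding directly and using that $e_{i_1}\cdots e_{i_{2m}}$ vanishes unless the indices form a permutation $\sigma$, in which case it equals $(\operatorname{sgn}\sigma)\,e_1\cdots e_{2m}$, gives
\[
 a^{2m}=S_{2m}(A_1,\dots,A_{2m})\,e_1\cdots e_{2m}.
\]
Because $e_1\cdots e_{2m}$ is a nonzero basis element, it now suffices to prove $a^{2m}=0$.

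The heart of the argument is to show that $a$ is nilpotent of the correct index. Set $C=a^2\in M_m(E_0)$, a matrix over the commutative ring $E_0$, and consider the power-sum traces $\operatorname{tr}(C^k)=\operatorname{tr}(a^{2k})$. Expanding $\operatorname{tr}(a^{j})=\sum \operatorname{tr}(A_{i_1}\cdots A_{i_j})\,e_{i_1}\cdots e_{i_j}$ and using the cyclic invariance of the trace together with $e_{i_2}\cdots e_{i_j}e_{i_1}=(-1)^{j-1}e_{i_1}\cdots e_{i_j}$, the cyclic shift of indices multiplies each summand by $(-1)^{j-1}$; re-indexing the whole sum by this shift then forces $\operatorname{tr}(a^{j})=(-1)^{j-1}\operatorname{tr}(a^{j})$, so $\operatorname{tr}(a^{2k})=0$ for every $k\geq 1$. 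This is where the anticommutativity of $E$ does the real work, and it is the main obstacle of the whole proof: recognizing that the Grassmann structure is the right auxiliary object is precisely what converts the alternating sum $S_{2m}$ into a single matrix power with vanishing traces.

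Finally, over the commutative $\mathbb{Q}$-algebra $E_0$ the vanishing of $\operatorname{tr}(C),\dots,\operatorname{tr}(C^m)$ forces, through Newton's identities, every coefficient of the characteristic polynomial of $C$ except the leading one to vanish, so $\chi_C(t)=t^m$. Cayley–Hamilton then yields $C^m=0$, that is $a^{2m}=(a^2)^m=0$, and the displayed formula gives $S_{2m}(A_1,\dots,A_{2m})=0$. Specializing the resulting integral identity back to an arbitrary field $K$ completes the proof.
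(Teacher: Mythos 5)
Your argument is correct: it is Rosset's exterior-algebra proof of the Amitsur--Levitzki theorem, and all the key steps are in place — the identity $a^{2m}=S_{2m}(A_1,\dots,A_{2m})\,e_1\cdots e_{2m}$ for $a=\sum e_iA_i$, the vanishing of the even traces from cyclic invariance against the anticommutation sign, Newton's identities plus Cayley--Hamilton over the commutative $\mathbb{Q}$-algebra $E_0$ to get $(a^2)^m=0$, and the reduction to characteristic $0$ via generic matrices over $\mathbb{Z}$. The paper, by contrast, gives no proof at all: it simply cites Passman's book (Theorem 1.9, p.~175 of \cite{P}), which is the standard way to handle this classical result in a note whose contribution lies elsewhere. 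So the comparison is really citation versus self-contained argument: the citation keeps the paper short and signals that nothing new is claimed here, while your write-up buys a complete, essentially linear-algebraic proof whose only external inputs are Cayley--Hamilton over a commutative ring and Newton's identities, and which makes transparent exactly where the hypotheses enter (the characteristic-$0$ reduction is needed precisely because the trace argument only yields $2\operatorname{tr}(a^{2k})=0$). One small point worth making explicit if you include this in a text: the final step uses that $E$ is a free $K$-module with the monomials in the $e_i$ as a basis, so that a matrix over $K$ multiplied by the nonzero basis element $e_1\cdots e_{2m}$ vanishes only if the matrix itself does.
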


\begin{proof}
	See \cite[Theorem  1.9, p. 175]{P}
\end{proof}	

A result well-known is the lemma:

\begin{lem} \label{f}
	
	Let A be a R-algebra and let I a right ideal of R. If I satisfies a polynomial identity of degree k and $I^k\neq 0$, then A satisfies a GPI.

\end{lem}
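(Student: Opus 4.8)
The plan is to manufacture a nondegenerate multilinear generalized polynomial identity for $A$ out of the ordinary polynomial identity satisfied by the right ideal $I$, exploiting the fact that a right ideal absorbs $A$ on the right so that arbitrary elements of $A$ can be fed into $I$. First I would pass to a multilinear identity: by the usual multilinearization process the hypothesis that $I$ satisfies a polynomial identity of degree $k$ yields a multilinear polynomial identity
$$p(x_1,\ldots,x_d)=\sum_{\sigma\in Sym_d}\lambda_\sigma\, x_{\sigma(1)}x_{\sigma(2)}\cdots x_{\sigma(d)}$$
for $I$, of degree $d\le k$, with not all $\lambda_\sigma$ equal to zero. Relabeling the variables I may assume $\lambda_{\mathrm{id}}\neq 0$ (over a field one normalizes $\lambda_{\mathrm{id}}=1$). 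Since $I$ is a right ideal one checks $I^{m+1}=I^mI\subseteq I^mA\subseteq I^m$ for every $m$, whence $I^d\supseteq I^k\neq 0$; so I can fix $u_1,\ldots,u_d\in I$ with $u_1u_2\cdots u_d\neq 0$.

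The key step is the substitution $x_i=u_iy_i$. Because $I$ is a right ideal, $u_iy\in I$ for every $y\in A$, so all arguments stay inside $I$ and therefore
$$g(y_1,\ldots,y_d):=p(u_1y_1,\ldots,u_dy_d)=\sum_{\sigma\in Sym_d}\lambda_\sigma\, u_{\sigma(1)}y_{\sigma(1)}u_{\sigma(2)}y_{\sigma(2)}\cdots u_{\sigma(d)}y_{\sigma(d)}$$
vanishes for all $y_1,\ldots,y_d\in A$. Thus $g$ is a multilinear generalized polynomial identity for $A$, and its $\sigma$-component, namely the terms in which the variables occur in the order $\sigma$, is exactly $g^\sigma=\lambda_\sigma u_{\sigma(1)}y_{\sigma(1)}\cdots u_{\sigma(d)}y_{\sigma(d)}$. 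I would remark here that distinct permutations produce genuinely distinct orderings of the distinct variables $y_1,\ldots,y_d$, so the components do not interfere and the displayed expression is honestly the homogeneous multilinear form demanded by the definition of GPI.

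It then remains to verify that $g$ is nondegenerate, i.e. that some $g^\sigma$ is not itself a generalized polynomial identity. I would test $g^{\mathrm{id}}$ at $y_1=\cdots=y_d=1$ (legitimate since $A$ is unital, having a group of units), obtaining $\lambda_{\mathrm{id}}u_1u_2\cdots u_d$. By the choice of the $u_i$ this is nonzero, so $g^{\mathrm{id}}$ is not a GPI and hence $g$ is nondegenerate; consequently $A$ satisfies a GPI, as required.

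The point requiring care — and the main obstacle — is precisely this nondegeneracy: I must guarantee that the leading coefficient survives the substitution, that is, $\lambda_{\mathrm{id}}u_1\cdots u_d\neq 0$. In the setting actually used in the paper, where $R$ is a field, this is automatic once $u_1\cdots u_d\neq 0$, since $\lambda_{\mathrm{id}}$ is then invertible. Over an arbitrary ring $R$ the difficulty is that a nonzero scalar might annihilate the chosen product, so one must instead select a permutation $\sigma_0$ with $\lambda_{\sigma_0}\neq 0$ together with elements whose ordered product $u_{\sigma_0(1)}\cdots u_{\sigma_0(d)}\in I^d$ is not killed by $\lambda_{\sigma_0}$, and work with the corresponding component $g^{\sigma_0}$; handling this scalar-survival issue cleanly is where the essential work lies, and it is the reason the hypothesis $I^d\neq 0$ (guaranteed by $I^k\neq 0$) is indispensable.
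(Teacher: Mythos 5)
The paper offers no proof of Lemma~\ref{f}: it is stated as ``well-known'' with an empty proof slot (it is the standard right-ideal-PI-implies-GPI lemma from the group-identity literature, cf.\ \cite{GSV}), so there is nothing internal to compare you against. Your argument is the classical proof of that result and is correct in the setting where the paper actually uses it: multilinearize the identity of $I$ to $\sum_\sigma \lambda_\sigma x_{\sigma(1)}\cdots x_{\sigma(d)}$ with $d\le k$; note $I^k\subseteq I^d$ because $I^{m+1}\subseteq I^mA\subseteq I^m$, so $I^d\neq 0$ and one may pick $u_1,\ldots,u_d\in I$ with $u_1\cdots u_d\neq 0$; substitute $x_i=u_iy_i$ (legal since $I$ is a right ideal) to get a multilinear generalized identity for $A$; and certify nondegeneracy by evaluating the identity component at $y_i=1$. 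You also silently repair the statement's typo (``right ideal of $R$'' must mean ``right ideal of $A$''), which is the correct reading.

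The one loose end is the scalar-survival issue you flag yourself at the end: over a base ring $R$ that is not a field, $I^d\neq 0$ does not by itself guarantee $\lambda_{\sigma_0}u_{\sigma_0(1)}\cdots u_{\sigma_0(d)}\neq 0$ for some $\sigma_0$ with $\lambda_{\sigma_0}\neq 0$ --- if $A$ has $R$-torsion, every nonzero coefficient of the multilinearized identity could annihilate $I^d$, and then your $g$ is degenerate. You name the obstacle but do not close it, so as a proof of the lemma \emph{as literally stated} (arbitrary $R$-algebra) there is a genuine gap. In the paper's application (Theorem~\ref{t2}) the conclusion is anyway transported to $KG$ with $K$ the field of fractions, where normalizing $\lambda_{\mathrm{id}}=1$ makes the problem vanish; the clean fix is therefore to add the hypothesis that $R$ is a field, or to tensor with the field of fractions before running your argument. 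With that caveat recorded, your proof is complete and is certainly more than the paper provides.
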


\begin{lem}\label{p2}
	
	Suppose that char K = p $>$ 0. Then $K[G]$ is semiprime if and only if $\phi (G)$ is a p'-group.
	
\end{lem}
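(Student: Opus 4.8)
The plan is to pass to the standard reformulation in terms of finite normal subgroups and then treat the two implications separately, the forward implication being elementary and the converse resting on Passman's $\Delta$-methods \cite{P}.

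First I would record the equivalence that $\phi(G)$ is a $p'$-group if and only if $G$ has no finite normal subgroup whose order is divisible by $p$. Write $\phi^{+}(G)$ for the set of torsion elements of $\phi(G)$, a locally finite normal subgroup. One direction is immediate, since every element of a finite normal subgroup has finitely many conjugates and finite order, so such a subgroup is contained in $\phi^{+}(G)$. For the other, if some $x\in\phi(G)$ had order divisible by $p$, its conjugacy class $\{x_1,\ldots,x_n\}$ would be finite, each $x_i$ again lying in the locally finite group $\phi^{+}(G)$, so that $N=\langle x_1,\ldots,x_n\rangle$ is finite; being generated by a full conjugacy class, $N$ is normal in $G$ and $p\mid |N|$. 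Hence $\phi(G)$ is a $p'$-group exactly when no such $N$ exists.

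For the ``only if'' part I would argue by contraposition. Suppose $\phi(G)$ is not a $p'$-group and choose, as above, a finite normal subgroup $N$ with $p\mid |N|$. Since $N$ is finite and $\operatorname{char}K=p$ divides $|N|$, Maschke's theorem fails for the finite-dimensional algebra $K[N]$, so its Jacobson radical $J=J(K[N])$ is a nonzero nilpotent ideal. Being the radical, $J$ is invariant under every ring automorphism of $K[N]$, in particular under conjugation by $G$ (which permutes $N$); thus $J$ is $G$-stable, and for each coset representative $t$ of $N$ in $G$ we have $Jt=tJ$, whence $J\,K[G]=K[G]\,J$ is a two-sided ideal of $K[G]$. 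Using this identity repeatedly gives $(J\,K[G])^{m}=J^{m}K[G]=0$ as soon as $J^{m}=0$, so $K[G]$ carries a nonzero nilpotent ideal and is not semiprime.

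The substantial direction is the converse: assuming $\phi^{+}(G)$ is a $p'$-group, show $K[G]$ is semiprime. Here I would invoke the $\Delta$-machinery of \cite{P}: semiprimeness of $K[G]$ is governed by that of the FC-subalgebra $K[\phi(G)]$, and since $\phi(G)/\phi^{+}(G)$ is torsion-free this reduces in turn to $K[\phi^{+}(G)]$. Finally $\phi^{+}(G)$ is a locally finite $p'$-group, so Maschke's theorem makes $K[H_0]$ semisimple on every finite subgroup $H_0$; passing to the directed union shows $K[\phi^{+}(G)]$ is von Neumann regular, hence semiprime. The main obstacle is precisely this converse: the reductions first to $\phi(G)$ and then to $\phi^{+}(G)$ are the technical heart of the $\Delta$-method, and I would cite Passman rather than reproduce them, in the same spirit as Lemmas \ref{s} and \ref{l}.
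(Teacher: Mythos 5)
Your outline is correct, and it is essentially the classical proof of this theorem of Passman; the paper itself offers no argument at all, simply citing \cite[p.~131]{P}, so everything you wrote is added value rather than a divergence. The one direction you prove in full (a finite normal subgroup $N$ with $p\mid |N|$ forces a nonzero nilpotent ideal) is sound, though it can be shortened: the element $\sigma=\sum_{n\in N}n$ is central in $K[N]$, is fixed by conjugation since $N$ is normal, and satisfies $\sigma^{2}=|N|\sigma=0$ in characteristic $p$, so $K[G]\sigma$ is already a two-sided ideal of square zero without invoking the Jacobson radical of $K[N]$. You correctly identify the converse (that $\phi^{+}(G)$ a $p'$-group forces semiprimeness) as the substantial step; deferring it to the $\Delta$-method reductions in \cite{P} is exactly what the paper does for the whole lemma, so no gap remains.
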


\begin{proof}
	See \cite[p. 131]{P}
\end{proof}

\begin{lem}\label{p3}	
	
	If char K = 0, then $K[G]$ is semiprime. 
	
\end{lem}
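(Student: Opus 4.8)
The plan is to reduce the assertion to the case $K=\mathbb{C}$ and there exploit the positivity of the canonical trace. I would first recall the element-wise description of semiprimeness: a ring $R$ with unit fails to be semiprime exactly when there is a nonzero $\alpha$ with $\alpha R\alpha=0$ (if $I^{2}=0$ with $0\neq I$, any $0\neq\alpha\in I$ works, and conversely $R\alpha R$ is then a nonzero ideal of square zero). So it suffices to show that $\alpha K[G]\alpha=0$ forces $\alpha=0$.

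First I would treat $K=\mathbb{C}$. Equip $\mathbb{C}[G]$ with the conjugate-linear anti-involution $*$ determined by $g^{*}=g^{-1}$ and $\lambda^{*}=\bar\lambda$ for $\lambda\in\mathbb{C}$, and let $\mathrm{tr}(\sum_g a_g g)=a_e$ be the coefficient of the identity. A direct computation gives $\mathrm{tr}(\alpha\alpha^{*})=\sum_g|a_g|^{2}$, so $\mathrm{tr}(\alpha\alpha^{*})>0$ whenever $\alpha\neq0$; moreover $\mathrm{tr}$ satisfies $\mathrm{tr}(\beta\delta)=\mathrm{tr}(\delta\beta)$. The key point is then that a nonzero self-adjoint element $\gamma=\gamma^{*}$ cannot be nilpotent: since $\gamma^{2}$ is again self-adjoint and $\mathrm{tr}(\gamma^{2})=\mathrm{tr}(\gamma\gamma^{*})>0$ forces $\gamma^{2}\neq0$, induction yields $\gamma^{2^{n}}\neq0$ for every $n$. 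Now suppose $\alpha\neq0$ and $\alpha\,\mathbb{C}[G]\,\alpha=0$. Put $\gamma=\alpha^{*}\alpha$; then $\gamma^{*}=\gamma$ and $\mathrm{tr}(\gamma)=\mathrm{tr}(\alpha\alpha^{*})>0$, so $\gamma\neq0$, while $\gamma^{2}=\alpha^{*}(\alpha\alpha^{*}\alpha)=0$ because $\alpha\alpha^{*}\alpha\in\alpha\,\mathbb{C}[G]\,\alpha=0$. This contradicts the non-nilpotency of $\gamma$, so $\mathbb{C}[G]$ is semiprime.

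For an arbitrary field $K$ of characteristic $0$ I would descend to a finitely generated subfield. If $K[G]$ were not semiprime, choose $0\neq\alpha\in K[G]$ with $\alpha K[G]\alpha=0$; since $K[G]$ is $K$-spanned by $G$, this is equivalent to $\alpha g\alpha=0$ for all $g\in G$. Let $F\subseteq K$ be the subfield generated by $\mathbb{Q}$ together with the finitely many coefficients of $\alpha$. Then $\alpha\in F[G]$ and each relation $\alpha g\alpha=0$ already holds in $F[G]$, whence $\alpha F[G]\alpha=0$ with $\alpha\neq0$. As $F$ is a finitely generated field extension of $\mathbb{Q}$, it has finite transcendence degree and embeds into $\mathbb{C}$; the induced injection $F[G]\hookrightarrow\mathbb{C}[G]$ transports $\alpha$ to a nonzero element annihilated by $\mathbb{C}[G]$ on both sides, contradicting the previous paragraph. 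Hence $K[G]$ is semiprime.

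The main obstacle is the passage from a general characteristic-zero field to $\mathbb{C}$: because $K$ may have arbitrarily large cardinality it need not embed into $\mathbb{C}$ directly, and the reduction to a finitely generated, hence embeddable, subfield is precisely what makes the trace-positivity argument applicable. Alternatively, in the style of the preceding lemmas one may simply cite \cite{P}.
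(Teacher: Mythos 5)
Your proof is correct. The paper gives no argument of its own here, merely citing \cite[p.~130]{P}; the proof you supply --- the reduction of semiprimeness to the nonexistence of $\alpha\neq 0$ with $\alpha K[G]\alpha=0$, the descent to a finitely generated subfield embeddable in $\mathbb{C}$, and the positivity of the trace $\mathrm{tr}(\alpha\alpha^{*})=\sum_g|a_g|^2$ applied to the self-adjoint element $\alpha^{*}\alpha$ --- is precisely the classical argument found in that reference, so you have in effect filled in the details the paper omits.
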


\begin{proof}
	See \cite[p.130]{P}
\end{proof}

\begin{lem}\label{g1}	
	
	Let A be an algebraic algebra over an infinite field K. If $U(A)$ is LPI then A is a PI-ring. Moreover, if A is non-commutative then A is PI-ring, provided that the non-central units of A satisfy an LPI. 
	
\end{lem}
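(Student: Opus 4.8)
The first assertion is exactly the theorem of Dokuchaev and Gon\c{c}alves \cite{GonD}, and the plan is to recover it from the apparatus of Section~2 and then read off the refinement. Since $A$ is algebraic over $K$, its Jacobson radical $J(A)$ is nil, hence quasi-regular, so every unit of $\bar A = A/J(A)$ lifts to a unit of $A$; thus $U(A)\to U(\bar A)$ is surjective and $U(\bar A)$ again satisfies the LPI. As $\bar A$ is semiprimitive, in particular semiprime, I may replace $A$ by $\bar A$ and take $R=K$, so that $R^{-1}\bar A=\bar A$ and Corollaries~\ref{c1}--\ref{c3} are available.

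The engine of the proof is a dichotomy coming from Theorem~\ref{p1} and Lemma~\ref{f}. Fix $a,b,c\in\bar A$ with $a^{2}=bc=0$. By Theorem~\ref{p1} the right ideal $bac\bar A$ satisfies a fixed polynomial identity of degree $d$. Either some such ideal is nonzero --- and then, being a nonzero right ideal of the semiprime ring $\bar A$, it is not nilpotent, so $(bac\bar A)^{d}\neq 0$ and Lemma~\ref{f} forces $\bar A$ to satisfy a GPI --- or $bac=0$ for all admissible $a,b,c$, in which case Lemma~\ref{l1} makes every idempotent of $\bar A$ central and Lemma~\ref{l2} yields $bac=0$ whenever $bc=0$ and $a$ is nilpotent. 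This is precisely the content of Corollary~\ref{c3} reached from the other side.

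In the GPI branch I would feed the identity into Martindale's structure theorem: each primitive image of $\bar A$ then has nonzero socle and is a dense ring of linear transformations over a division ring that is finite dimensional over its centre. The polynomial identity of degree $d$ that Theorem~\ref{p1} places on the socle ideals $bac\bar A$ bounds the dimension of the underlying space uniformly, so each primitive factor is finite dimensional over its centre of bounded degree, and the Amitsur--Levitzki identity (Lemma~\ref{l}) gives a single standard identity $S_{2m}$ on every factor. In the central-idempotent branch I would run the density argument: if a primitive factor were $M_{n}(D)$ with $n\geq 2$, the matrix units give $a=e_{21}$ nilpotent, $b=e_{12}$, $c=e_{11}$ with $bc=0$ but $bac=e_{11}\neq 0$, contradicting the annihilator condition; hence every factor is a division algebra, and the LPI on its multiplicative group must be pushed to commutativity or bounded finite dimension. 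Either way each primitive factor satisfies a fixed standard identity, which is inherited by the subdirect product and, once the nil radical is controlled, by $A$ itself.

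For the refinement I would observe that the only units invoked in Theorem~\ref{p1}, and so throughout the chain above, are the ones of the form $1+aua$ and $(1+aua)(1+bauab)$ assembled from square-zero elements; when $A$ is noncommutative these are noncentral, so every step survives under the weaker hypothesis that only the noncentral units satisfy an LPI. I expect the main obstacles to be two. First, the transfer of the annihilator and identity conditions to the individual primitive images together with the \emph{uniformity} of the degree --- this is exactly why Theorem~\ref{p1} delivers an explicit bound $d$ rather than an unquantified PI, and why the lifting through the nil radical must be done with a bounded index. Second, the division-algebra endgame, namely that an algebraic division algebra over an infinite field whose unit group satisfies an LPI is commutative or finite dimensional; this is the delicate point already confronted in \cite{GonD}.
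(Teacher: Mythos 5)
The first thing to say is that the paper does not prove this statement at all: Lemma~\ref{g1} is imported verbatim from \cite[Theorem 5]{GonD}, and the ``proof'' in the text is the citation. You correctly identify the source, but your attempt to reconstruct the argument from the machinery of Section~2 has a genuine gap, and it sits exactly where the real content of the Dokuchaev--Gon\c{c}alves theorem lies. In both of your branches you reduce (modulo several unproved assertions) to primitive images that are matrix rings over, or simply are, division algebras, and you then need the statement that an algebraic division algebra over an infinite field whose unit group satisfies an LPI is finite dimensional over its centre. You name this as ``the delicate point already confronted in \cite{GonD}'' and leave it open --- but that point \emph{is} the theorem; everything before it is reduction. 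A proof that defers its hardest step back to the paper being proved is circular, so as written the proposal does not establish the lemma.

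Two further problems would remain even if the division-algebra step were granted. First, the reduction itself is not carried out: in the GPI branch you assert that the degree-$d$ identity on the ideals $bac\bar{A}$ ``bounds the dimension of the underlying space uniformly,'' but a primitive GPI ring in Martindale's theorem is only a dense ring of transformations with nonzero socle --- the underlying space can be infinite dimensional --- so an actual argument producing matrix units from large $n$ and contradicting the degree bound is needed, as is a justification for why a single standard identity on each primitive factor passes back through the nil radical to $A$. Second, note that Theorem~\ref{p1}, Corollary~\ref{c1} and Corollary~\ref{c3} are only valid for LPIs with nonzero exponent sums (Remark~\ref{LPInotvalid} shows they fail otherwise), whereas the cited theorem of \cite{GonD} concerns general Laurent polynomial identities; routing the proof through Theorem~\ref{p1} would therefore prove a strictly weaker statement than the one quoted. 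For the ``moreover'' clause, the claim that the units $1+aua$ and $(1+aua)(1+bauab)$ are automatically noncentral when $A$ is noncommutative is false as stated (take $aua=0$), so the refinement also needs a separate argument. The honest conclusion is that this lemma cannot be recovered from the present paper's apparatus and must simply be cited, as the author does.
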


\begin{proof}
	See \cite[Theorem 5]{GonD}
\end{proof}				

Now we are ready to do our adaptations to the LPI case of  the Conjecture of B.Hartley.

\begin{teo}\label{t2}
	Let G be a torsion group, R a commutative domain and  U(RG) satisfies a Laurent polynomial identity. 
	
	\begin{enumerate}
		\item[1.] If either char R = 0 or char R = p $>$0 with G a p'-group and R infinite, then RG satisfies $S_4$, the standard identity of degree four;
		
		\item[2.] If G is any group, then  either there exists a integer d $> 0$ such that for all a,b,c $\in$ RG with $a^2=bc=0$, we have that bacRG is a nilpotent ideal of exponent less than d  or $\phi (G)$ is of finite index in G and $\phi (G)'$ is a finite group. In the latter case, with G a torsion group and R an infinite field, RG has a polynomial indentity.
		
	\end{enumerate}

\end{teo}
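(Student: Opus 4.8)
The plan is to treat the two parts separately, in each case passing to the field of fractions $K = R^{-1}R$ so that the group-algebra structure results (Lemmas \ref{s}, \ref{l}, \ref{p2}, \ref{p3}, \ref{g1}) become available, with Theorem \ref{p1} and Corollary \ref{c3} serving as the two main engines. At the outset I would record that $R$ is infinite whenever infinity is needed: a domain of characteristic $0$ contains $\mathbb{Z}$, and in the characteristic $p$ case it is assumed infinite; hence $K = R^{-1}R$ is an infinite field and $R^{-1}(RG) = KG$.

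For Part 1, I would first show that $KG$ is semiprime: by Lemma \ref{p3} when $\mathrm{char}\,R = 0$, and by Lemma \ref{p2} when $\mathrm{char}\,R = p$, since $\phi(G)\le G$ is then a $p'$-group. A short localization argument pushes semiprimeness down to $RG$, since a nonzero ideal $N$ of $RG$ with $N^2 = 0$ would give $(R^{-1}N)^2 = 0$ in $KG$, forcing $N = 0$. Now Corollary \ref{c3}, applied to the semiprime $R$-algebra $RG$ whose unit group is $LPI$, yields that every idempotent of $KG$ is central. The heart of Part 1 is then group-theoretic: for every finite subgroup $H$ of $G$ the order $|H|$ is invertible in $K$ (characteristic $0$, or a $p'$-group in characteristic $p$), so $\hat H = |H|^{-1}\sum_{h\in H} h$ is an idempotent, and its centrality forces $\widehat{gHg^{-1}} = \hat H$, hence $gHg^{-1} = H$; thus every finite subgroup is normal. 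Since $G$ is torsion this makes every subgroup normal, so by Baer's classification of Dedekind groups $G$ is abelian or Hamiltonian, $G \cong Q_8 \times E \times O$. In the abelian case $KG$ is commutative; in the Hamiltonian case $KG \cong KQ_8 \otimes_K K[E\times O]$ with $KQ_8$ semisimple of matrix degree at most two (copies of $K$ together with one quaternion component), so $KQ_8$ satisfies $S_4$ by Lemma \ref{l}, and tensoring with the commutative factor preserves this. Hence $KG$, and a fortiori $RG$, satisfies $S_4$.

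For Part 2, I would invoke Theorem \ref{p1} directly: there is $f \in R[X]$ of degree $d$ with $f(bacu) = 0$ for all $u$ whenever $a^2 = bc = 0$, so the right ideal $bacRG$ satisfies a polynomial identity of degree $d$. This produces a clean dichotomy. If $(bacRG)^d = 0$ for all admissible $a,b,c$, then each $bacRG$ is nilpotent of bounded exponent (at most $d$), which is the first alternative. Otherwise $(bacRG)^d \neq 0$ for some choice, and Lemma \ref{f} shows that $RG$, and hence $KG$, satisfies a $GPI$; Lemma \ref{s} then gives $[G : \phi(G)] < \infty$ and $|\phi(G)'| < \infty$, the second alternative.

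Finally, in that second alternative with $G$ torsion and $R = K$ an infinite field, I would argue that $\phi(G)$ is a torsion $FC$-group, hence locally finite, and since $[G:\phi(G)] < \infty$ the whole group $G$ is locally finite; therefore $KG = \bigcup_{|H|<\infty} KH$ is an algebraic algebra over the infinite field $K$, and Lemma \ref{g1} forces $KG$ to be a $PI$-ring because $U(KG)$ is $LPI$. The main obstacle I expect is the fifth step of Part 1: the passage from ``all idempotents central'' to the Dedekind structure of $G$, the characteristic bookkeeping ensuring that enough averaging idempotents $\hat H$ actually exist, and the verification that every Hamiltonian group algebra satisfies precisely $S_4$; by contrast, the transfers of semiprimeness and of $GPI$ between $RG$ and its localization $KG$ are routine but must be stated with care.
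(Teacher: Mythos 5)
Your proposal is correct and follows the same skeleton as the paper's proof in both parts: semiprimeness of $KG$ via Lemmas \ref{p2} and \ref{p3}, centrality of idempotents via Corollary \ref{c3}, the Dedekind/Hamiltonian structure of $G$, and $S_4$; then for Part 2 the dichotomy from Theorem \ref{p1} and Lemma \ref{f}, followed by Lemma \ref{s}, local finiteness, and Lemma \ref{g1}. The one genuine divergence is the last step of Part 1. The paper takes the abelian subgroup $S$ of index $2$ in the Hamiltonian case, embeds $KG$ into $M_2(KS)$ by \cite[Lemma V.1.10]{P}, and applies Lemma \ref{l} directly. You instead decompose $KG \cong KQ_8 \otimes_K K[E\times O]$ and argue through the Wedderburn components of $KQ_8$. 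This works, but note that over a general field the nontrivial component of $KQ_8$ may be a quaternion \emph{division} algebra rather than $M_2(K)$, so Lemma \ref{l} as stated does not literally apply to it; you need the extra (standard) observation that $S_4$ is multilinear and therefore passes down from $M_2(\bar K)$ after extending scalars to a splitting field, or an explicit citation that degree-two central simple algebras satisfy $S_4$. The paper's embedding route avoids this issue entirely, which is what it buys; your route has the mild advantage of not needing Passman's embedding lemma and of making the Wedderburn structure explicit. Your use of general finite subgroups $H$ and their averaging idempotents $\hat H$, where the paper uses only cyclic subgroups $\langle g\rangle$, is a cosmetic difference, and your explicit transfers of semiprimeness and of the $GPI$ between $RG$ and $KG$ are points the paper passes over silently.
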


\begin{proof}  By Lemmas \ref{p2} and \ref{p3} $KG$ is a semiprime $K$-algebra and by Lema \ref{c3} for all $g \in G$
	with order $n$, the idempotent $n^{-1}$ $(1 + g + g^2 +\ldots +g^{n-1})$ is  central in $KG$, $K$ is the field of fractions of R. From this $\langle g \rangle$ is a normal cyclic group  in $G$. Thus $G$ is an abelian or Hamiltonian group. With $G$ Hamiltonian, $G = Q_8\times E \times A$, where $Q_8$ is the quarternion group of order 8, $E$ is an elementary Abelian 2-group and $A$ is abelian with elements of odd order. Thus $G$ has an abelian subgroup $S$ of index 2. And by \cite[Lemma V.1.10]{P} $KG$ embeds in $M_2(KS)$ and by Lemma \ref{l}, $KG$  and so, $RG$ satisfies $S_4$.

	For the second part if  $a,b,c \in RG$ with $a^2=bc=0$. By Theorem \ref{p1}  $bacRG$ has a polynomial identity of degree $d$. If $(bacRG)^d \neq 0 $, then by Lemma \ref{f} $RG$ satisfies a GPI and from this $KG$ satisfies a GPI.  By  Lemma \ref{s} $[G :\phi (G)] < \infty $ and $|\phi '(G)| < \infty$. In this case with $G$ a torsion group $\phi (G)$ is locally finite. Follows that $G$ is locally finite and, so, $RG$ is algebraic. With R an infinite field by Lemma \ref{g1} $RG$ has a polynomial identity.
\end{proof}

\begin{remark}\label{LPInotvalid}
	
	In  general,  Theorem \ref{p1}  not hold without the restrictions  about exponents. For example, let $A=M_n(R), \, n>1$ and $R$ an infinite commutative domain. By Lemma \ref{l} $U(A)$ satisfies the $LPI$ 
	
	$$ f(x_1,\ldots,x_{2n})= S_{2n}\cdot(x_1\cdots x_{2n})^{-1}  $$  where $S_{2n}$ is the  standard identity and $f$ is a  $LPI$ with all words non-constant  having sum of exponents zero at every variable. Admitting the Theorem \ref{p1}, $ bacu$ is algebraic for all $u \in A$, in particular, for $ c=b $ and $u=a$, $ba$ is algebraic.  It follows that by Vandermonde augument  $ba$ is nilpotent. In general,   $ba$ is not a nilpotent element.   
	We can see this with  $a=e_{21}$ and $b=e_{12}$,  $ba=e_{11}$ is not nilpotent.
	
	Still considering $ A = M_n (R) $, $R$ a infinite commutative domain, $ a = e_ {21} $ and $ b =e_ {12} $ by Proposition \ref {a} $ U (A) $ does not admit group identity. This show that there are algebras with  $LPI $ without $ GI $.
	
\end{remark}

\newpage

\printindex

\end{document}